\documentclass[11pt]{article}
\usepackage[margin=1in]{geometry}
\usepackage{amsmath,amssymb,amsthm,mathtools}
\usepackage[T1]{fontenc}
\usepackage{newtxtext,newtxmath}
\usepackage[colorlinks=true,linkcolor=blue,urlcolor=blue,citecolor=blue]{hyperref}
\usepackage{xurl}
\usepackage{enumitem}
\newtheorem{theorem}{Theorem}
\newtheorem{lemma}[theorem]{Lemma}
\newtheorem{corollary}[theorem]{Corollary}

\newcommand{\R}{\mathbb R}
\newcommand{\dd}{\,\mathrm d}
\newcommand{\norm}[1]{\left\|#1\right\|}
\newcommand{\G}{\Gamma}
\title{A special Hardy estimate in the axisymmetric swirl problem\\\large Research proof draft, revision 1.1}
\author{Daoguo Zhou\\
\normalsize School of Mathematics, Hangzhou Normal University, Hangzhou 311121, PR China\\
\normalsize \texttt{dgzhou@hznu.edu.cn}}
\date{21 October 2026}
\begin{document}
\maketitle
\begin{center}\small\textbf{Status: research proof draft, not a published theorem.}\\
The arguments below are submitted for mathematical verification.
\end{center}
\begin{abstract}
We examine the proof of Dongyi Wei's logarithmic circulation criterion. A trace estimate specific to $\nabla(u_r/r)$ replaces a generic squared-swirl form bound by one with one logarithm rather than two. Repeating the coupled $J$--$\Omega$ energy and comparison argument then gives a proposed threshold of order $(\log D)^{-5/4}$. Under the hypotheses stated below, the proof is intended to establish regularity when $|r u_\theta|\le C|\log r|^{-\alpha}$ uniformly near the axis with $\alpha>5/4$, and at $\alpha=5/4$ with an absolute sufficiently small coefficient. Every analytic step, including the cutoff core and exterior terms, is included for independent checking.
\end{abstract}

\section{Statement, normalization, and relation to Wei's criterion}
Consider the incompressible Navier--Stokes equations in $\R^3$ with viscosity one,
\begin{equation}\label{NS}
 \partial_t u+u\cdot\nabla u-\Delta u+\nabla p=0,\qquad \nabla\cdot u=0.
\end{equation}
Assume axisymmetry, and write
\[
 u=u_r e_r+u_\theta e_\theta+u_z e_z,\qquad
 \G=r u_\theta,\quad U=\frac{u_r}{r},\quad
 J=\frac{\omega_r}{r}=-\partial_z\frac{u_\theta}{r},\quad
 \Omega=\frac{\omega_\theta}{r}.
\]
For axisymmetric scalar functions we use $\dd x=r\dd r\dd z$; the omitted factor $2\pi$ only changes absolute constants. Vector and tensor norms are Cartesian norms.
Let $u\in C([0,T_*);H^2(\R^3))$ be the maximal strong solution with divergence-free axisymmetric initial data $u_0\in H^2$ and the additional assumption $\G_0\in L^\infty$. The estimates involving extra derivatives below are first performed at positive smooth times. Initial-time passage is explained at the end of the proof.

\begin{theorem}[Proposed quantitative criterion]\label{main}
There is an absolute constant $c_0>0$ with the following property. Define
\[
 M_1=(1+\norm{\G_0}_{\infty})\norm{u_0}_2,
 \qquad
 M_0=(\norm{J_0}_2+\norm{\Omega_0}_2)M_1^3,
 \qquad D=\max\{M_0^{1/4},r_0^{-1/2}M_1\}.
\]
For any $r_0>0$, if
\begin{equation}\label{threshold}
 \sup_{0<t<T_*}\sup_{0<r\le r_0,\ z\in\R}|\G(r,z,t)|
 \le c_0[1+\log(1+D)]^{-5/4},
\end{equation}
then $T_*=\infty$.
\end{theorem}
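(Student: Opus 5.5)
The plan is to follow the structure of Wei's proof of the logarithmic circulation criterion. The one change is at the step where the swirl enters the $J$--$\Omega$ energy estimate. Recall the equations for $J$ and $\Omega$:
\[
 \partial_t J+(b\cdot\nabla)J-\Big(\Delta+\tfrac{2}{r}\partial_r\Big)J=(\omega_r\partial_r+\omega_z\partial_z)U,
\]
\[
 \partial_t\Omega+(b\cdot\nabla)\Omega-\Big(\Delta+\tfrac{2}{r}\partial_r\Big)\Omega=-\partial_z\Big(\frac{u_\theta^2}{r^2}\Big),
\]
where $b=u_re_r+u_ze_z$ and $U=u_r/r$ satisfies $-(\Delta+\frac2r\partial_r)\psi=\Omega$ with $U=-\partial_z\psi$. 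We test the $J$-equation with $J$ and the $\Omega$-equation with $\Omega$. The stretching term for $J$ is bounded, after integrating by parts, by a bilinear form in $J$ and $\nabla U$. The $\Omega$ forcing becomes
\[
 \int \frac{u_\theta^2}{r^2}\,\partial_z\Omega\,\dd x=-2\int \frac{\G}{r^2}\,J\,\Omega\,\dd x,
\]
or, in Wei's formulation, a squared-swirl form paired with $\nabla\Omega$. The energy-level inputs are $\norm{u}_2\le\norm{u_0}_2$ and the maximum principle $\norm{\G}_\infty\le\norm{\G_0}_\infty$; these produce the quantities $M_1$, $M_0$, $D$ in the statement.

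Step one is to split space at a scale $r\le\rho$ with $\rho\le r_0$; the choice of $\rho$ will be tied to $D$. On the core $\{r<\rho\}$ we use the smallness hypothesis \eqref{threshold}, $|\G|\le\varepsilon$. On the exterior we use $\norm{\G}_\infty$ together with the energy bound, so that these terms cost only a power of $M_1/\rho$; the cutoff errors are handled the same way. Step two is the new ingredient, the trace estimate specific to $\nabla(u_r/r)$. It gives a Hardy-type bound of the form
\[
 \int_{r<\rho}\frac{|\G|\,|J|\,|\nabla U|}{r}\,\dd x\lesssim \varepsilon\,(\log D)^{1/2}\,\norm{\nabla J}_2\,\norm{\nabla\Omega}_2 .
\]
This has a single logarithm, where the generic Hardy inequality at the endpoint $\int |f|^2r^{-2}$ in two-dimensional $(r,z)$ variables costs $(\log D)^{1}$ in each of two factors. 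The single logarithm arises from the critical Hardy inequality $\int_{r<\rho}|f|^2r^{-2}|\log(r/\rho)|^{-2}\lesssim\int|\nabla f|^2$, applied only once, to the $J$ factor. The $\nabla U$ factor is controlled by elliptic regularity $\norm{\nabla^2\psi}\lesssim\norm{\Omega}$ in the weighted $r^{-1}$ sense; this uses the boundary trace of $U$ on the axis, where $\partial_rU$ vanishes.

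Step three is to assemble the coupled inequality for $E=\norm{J}_2^2+\norm{\Omega}_2^2$. We want
\[
 \frac{d}{dt}E+\norm{\nabla J}_2^2+\norm{\nabla\Omega}_2^2\le C\varepsilon^{a}(\log D)^{b}\big(\norm{\nabla J}_2^2+\norm{\nabla\Omega}_2^2\big)+C(\text{exterior}),
\]
with exponents such that absorption requires exactly $\varepsilon(\log D)^{5/4}\le c_0$. Heuristically, Wei's count was $(\log D)^{3/2}$ with two logarithms, and removing one half-logarithm from the swirl factor lowers the exponent to $5/4$. This step has to be checked carefully by running Wei's comparison/bootstrap argument. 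That argument is a continuity argument on the time $T$ up to which $E(t)\le 2E(0)+CM_1^{\cdot}$. The exterior terms are bounded by $\rho^{-k}M_1^{m}$, and $\rho$ is chosen as $D^{-2}$-like so that they match the $M_0$ normalization. Once $E$ is uniformly bounded, $\Omega\in L^\infty_tL^2$ gives regularity by the known $\omega_\theta/r$ criteria, hence $T_*=\infty$. The passage to the initial time is handled by approximating by positive times and using continuity of the $H^2$ solution.

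The main obstacle is Step two. The logarithm must be spent only once, so the weight $|\log r|^{-1}$ must fall on $J$ and not on $\nabla U$. The near-axis term $\int J\,\Omega\,\G r^{-2}$ has a nonintegrable $r^{-2}$ weight. I would first write $J=-\partial_z(u_\theta/r)$ and integrate by parts in $z$ to move the derivative onto $\Omega$. Then I would use $\G/r^2=u_\theta/r$ together with the trace identity $\partial_rU|_{r=0}=0$ to gain a full factor of $r$ from $\nabla U$. If that gain is only logarithmic, the exponent degrades back toward Wei's $3/2$, so this is where the claimed improvement would succeed or fail.
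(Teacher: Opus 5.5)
Your proposal has a genuine gap, and it is at the step you flag. The estimate you propose there is also not the one needed.

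\textbf{The key lemma.} The quantity $\int_{r<\rho}|\G||J||\nabla U|r^{-1}\dd x$ occurs in neither energy identity. It is not even dimensionally consistent with $\|\nabla J\|_2\|\nabla\Omega\|_2$, which carries one more inverse length. In the paper the $J$-stretching term is rewritten, via $\omega_re_r+\omega_ze_z=\nabla\times(u_\theta e_\theta)$, as $\int u_\theta e_\theta\cdot(\nabla J\times\nabla U)\dd x\le\frac12D_J+\frac12\|u_\theta\nabla U\|_2^2$. The gain is placed on the $\nabla U$ factor, the opposite of what you propose.

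On $\rho<r<\rho K$ one has $|u_\theta|\le\epsilon/r$. The radial part $\partial_rU$ costs no logarithm, since $\int|\partial_rU|^2r^{-2}\dd x\le\|\nabla^2U\|_2^2$ (the angular Hessian eigenvalue). The vertical part uses the line trace $\sup_{x_h}\int|\partial_zU|^2\dd z\le C\|\nabla^2U\|_2^2$, proved by Cauchy--Schwarz in $\xi$ with $\int_{\R^2}k^2(|\xi|^2+k^2)^{-2}\dd\xi=\pi$. This gives $\|u_\theta\nabla U\|_2^2\le C\epsilon^2(1+\log K)\|\partial_z\Omega\|_2^2$ plus exterior terms. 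That is one logarithm fewer than the generic bound $\epsilon\cdot C\epsilon(1+\log K)^2$.

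The near-axis term $2\int(u_\theta/r)J\Omega$, which you call the main obstacle, is \emph{not} improved in the paper. It keeps the two-logarithm constant $h=C\epsilon(1+L)^2$.

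Your core estimate also has no inner cutoff. With only $|\G|\le\epsilon$, the integral $\int_{r<\rho}|\G|r^{-2}|f|^2$ is not bounded by $\|\nabla f\|_2^2$; the critical Hardy inequality you quote would need $|\G|\lesssim|\log(r/\rho)|^{-2}$. The paper stops the logarithm at the solution-dependent radius $\epsilon/a_*$, where $a_*=\|V/r\|_\infty$ and $V=\int_0^r|u_\theta|$. Below that radius $V\le a_*s$, and globally $a_*^2\le\|J\|_2\|\nabla u\|_2$.

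\textbf{The exponent.} The value $5/4$ cannot come out of your Step three as set up. Take the unweighted $E=\|J\|_2^2+\|\Omega\|_2^2$, with the $\Omega$-forcing bounded through the linear form with constant $h$. Absorption then forces $h\lesssim1$, i.e.\ $\epsilon(\log K)^2\lesssim1$, which gives exponent $2$ whatever you gain in the $J$ equation.

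The paper instead splits $2|J\Omega|\le\sigma J^2+\sigma^{-1}\Omega^2$ with $\sigma=4h$. It puts the weight $\lambda=2k$ on $\|\Omega\|_2^2$, where $k=C\epsilon^2(1+L)$. Absorption becomes $16kh^2\le\frac12$, i.e.\ $\epsilon^4(1+L)^5\lesssim1$, so $L=b\epsilon^{-4/5}$. The generic $k\sim\epsilon^2(1+L)^2$ would give $\epsilon^4(1+L)^6$ and Wei's $3/2$. The saving is one full logarithm in $k$, not half a logarithm in the swirl factor.

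\textbf{The exterior and the bootstrap.} A bootstrap at a fixed radius tied to the scale-invariant $D$ does not close. The inner radius $\epsilon/a_*(t)$ moves with time. The bound $a_*\le E^{1/4}\|\nabla u\|_2^{1/2}$ controls $a_*$ only in $L^4$ in time. So the number of logarithms between $\epsilon/a_*(t)$ and a fixed outer radius is not bounded pointwise in time.

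The paper therefore takes the moving radius $r_1=\min\{\epsilon K/a_*,r_0\}$. It derives $E'\le CM\|\nabla u\|_2^2\max\{E^{4/3}/d^{8/3},r_0^{-4}\}$, with $d=\epsilon K$ and $M\sim\epsilon^{-6/5}(1+G_0)^2$. This is integrated with a comparison function against $\int\|\nabla u\|_2^2\le C\|u_0\|_2^2$. That requires $K\ge C\epsilon^{-8/5}D$, and with $K=e^{b\epsilon^{-4/5}}$ this is exactly the threshold $(\log D)^{-5/4}$.

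As written, your key lemma is unresolved and the absorption exponents are asserted rather than derived.
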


\begin{corollary}[Proposed logarithmic consequence]\label{cor}
Under the same initial-data and strong-solution hypotheses, suppose, for some $\delta_0\in(0,1/2)$,
\[
 |\G(r,z,t)|\le C_{\rm ax}|\log r|^{-\alpha}
 \quad (0<r\le\delta_0,\ z\in\R,\ 0<t<T_*).
\]
Then the proof below gives $T_*=\infty$ if $\alpha>5/4$ and $C_{\rm ax}$ is any fixed finite constant. It also gives this conclusion if $\alpha=5/4$ and $C_{\rm ax}$ is smaller than an absolute positive constant. Arbitrary coefficients at the endpoint are not asserted.
\end{corollary}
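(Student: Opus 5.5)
The corollary should follow from Theorem~\ref{main} by a suitable choice of the free radius $r_0$. No further PDE analysis is needed: the only task is to compare the logarithmic profile $C_{\rm ax}|\log r|^{-\alpha}$ with the threshold \eqref{threshold}. The threshold itself depends on $r_0$ through $D$, so we will track that dependence explicitly and then let $r_0\to0$.

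Fix $r_0\in(0,\delta_0]$. Since $\delta_0<1/2$, the map $r\mapsto|\log r|=-\log r$ is decreasing on $(0,r_0]$. Hence $|\log r|\ge|\log r_0|$ there, and the hypothesis gives
\[
\sup_{0<t<T_*}\sup_{0<r\le r_0,\,z\in\R}|\G|\le C_{\rm ax}|\log r_0|^{-\alpha}.
\]
Next we bound $D$ from above. We have $D\le M_0^{1/4}+r_0^{-1/2}M_1$, and $\log(1+a+b)\le\log(1+a)+\log(1+b)$ for $a,b\ge0$. Therefore
\[
1+\log(1+D)\le A+\log(1+r_0^{-1/2}M_1)\le A+\log(1+M_1)+\tfrac12|\log r_0|,
\]
where $A=1+\log(1+M_0^{1/4})$. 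Here we used $1+r_0^{-1/2}M_1\le r_0^{-1/2}(1+M_1)$, which holds because $r_0<1$. The quantity $B:=A+\log(1+M_1)$ depends only on the data, not on $r_0$. Consequently \eqref{threshold} holds as soon as
\begin{equation*}
C_{\rm ax}|\log r_0|^{-\alpha}\le c_0\bigl(B+\tfrac12|\log r_0|\bigr)^{-5/4}.
\end{equation*}
Setting $L=|\log r_0|$, this condition reads $C_{\rm ax}\,(B+L/2)^{5/4}\le c_0L^{\alpha}$.

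We now treat the two cases; the only point needing care is the endpoint bookkeeping.
\begin{itemize}
\item If $\alpha>5/4$, the left side grows like $L^{5/4}$ and the right side like $L^\alpha$, so the inequality holds for all sufficiently large $L$, whatever the value of $C_{\rm ax}$. Choosing $r_0$ small enough (and $\le\delta_0$) verifies \eqref{threshold}, and Theorem~\ref{main} gives $T_*=\infty$.
\item If $\alpha=5/4$, divide by $L^{5/4}$. The condition becomes $C_{\rm ax}(1/2+B/L)^{5/4}\le c_0$, and as $L\to\infty$ the left side tends to $2^{-5/4}C_{\rm ax}$. So if $C_{\rm ax}<2^{5/4}c_0$, which is an absolute constant, the condition holds for $r_0$ small depending on the data, and again $T_*=\infty$.
\end{itemize}

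The one subtlety is at the endpoint. The factor $r_0^{-1/2}$ inside $D$ contributes exactly $\tfrac12|\log r_0|$, which has the same order as the hypothesis's logarithm. This is why only a smallness condition on $C_{\rm ax}$, with an absolute bound $2^{5/4}c_0$, can be obtained at $\alpha=5/4$, and why arbitrary coefficients there are not claimed. Everything else is elementary, given that Theorem~\ref{main} is valid for every $r_0>0$ with the same absolute $c_0$.
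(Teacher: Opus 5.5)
Your proposal is correct and follows the paper's route. You apply Theorem~\ref{main} with $r_0\le\delta_0$ small and use $1+\log(1+D)\le\tfrac12|\log r_0|+O_{u_0}(1)$ to compare the two logarithms. Your explicit one-sided bound is just a more detailed version of the paper's asymptotic comparison; it needs no lower bound, covers zero data automatically, and makes the endpoint constant $2^{5/4}c_0$ explicit.
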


The data assumptions and the final continuation step are those used by Wei~\cite{Wei}. His Theorem~1.1 has exponent $3/2$ in the data-dependent logarithmic threshold, and his Corollary~1.1 states the near-axis condition $|\G|\le|\log r|^{-3/2}$. The change tested here is a specialized estimate for $u_\theta\nabla U$, not a change of constants in that theorem.

For viscosity $\nu>0$, set $\widetilde u(x,\tau)=\nu^{-1}u(x,\tau/\nu)$ and $\tau=\nu t$. Apply the viscosity-one statements to $\widetilde u$, so $\widetilde\G=\G/\nu$, $\widetilde J=J/\nu$, and $\widetilde\Omega=\Omega/\nu$. In particular the endpoint coefficient is an absolute constant times $\nu$; the data quantities in \eqref{threshold} are computed from the tilded initial data. The quantities $D$ and $\G$ are invariant under the usual viscosity-one spatial Navier--Stokes scaling.

\section{Preliminaries and a trace specific to the velocity quotient}
We use the energy inequality and circulation maximum principle
\begin{equation}\label{basic}
 \int_0^{T_*}\norm{\nabla u(t)}_2^2\dd t\le C\norm{u_0}_2^2,
 \qquad \norm{\G(t)}_\infty\le\norm{\G_0}_\infty=:G_0.
\end{equation}
The following elliptic estimates are Wei's Lemma~2.1:
\begin{equation}\label{elliptic}
 \norm{\nabla U}_2\le\norm{\Omega}_2,
 \qquad \norm{\nabla^2 U}_2\le\norm{\partial_z\Omega}_2.
\end{equation}
They follow from $(\Delta+2r^{-1}\partial_r)U=\partial_z\Omega$, integration by parts, and axis regularity $\partial_rU|_{r=0}=0$. The second derivative norm is the full Cartesian Hessian. In particular
\begin{equation}\label{radial}
 \int\frac{|\partial_r U|^2}{r^2}\dd x\le\norm{\nabla^2 U}_2^2.
\end{equation}
This is also immediate from the angular eigenvalue $\partial_rU/r$ of the horizontal Hessian.

\begin{lemma}[Vertical trace]\label{trace}
If $U\in H^2(\R^3)$, then
\begin{equation}\label{traceeq}
 \sup_{x_h\in\R^2}\int_{\R}|\partial_zU(x_h,z)|^2\dd z
 \le C\norm{\nabla^2U}_{L^2(\R^3)}^2.
\end{equation}
\end{lemma}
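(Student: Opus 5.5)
The plan is to prove \eqref{traceeq} on the Fourier side, in all three variables at once. The key point is that the $z$-derivative on the left supplies exactly the weight that makes a codimension-two trace bounded. For general $f\in H^1(\R^3)$ the trace onto a vertical line is not controlled by $\norm{\nabla f}_2$, since $H^1(\R^2)\not\subset L^\infty$. Here, however, $f=\partial_zU$ carries the factor $\xi_z$, and that factor exactly cancels the logarithmic divergence.

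Write $\xi=(\xi_h,\xi_z)\in\R^2\times\R$ and let $\widehat U$ be the Fourier transform of $U$. By density we may assume $U\in\mathcal S(\R^3)$; the general case follows by approximation. Fix $x_h$ and define the partial inverse transform in the horizontal variables,
\[
 g_{x_h}(\xi_z)=\int_{\R^2}i\xi_z\,\widehat U(\xi_h,\xi_z)\,e^{ix_h\cdot\xi_h}\dd\xi_h .
\]
Up to normalizing constants, $g_{x_h}$ is the one-dimensional Fourier transform in $z$ of $z\mapsto\partial_zU(x_h,z)$. One-dimensional Plancherel then gives
\[
 \int_\R|\partial_zU(x_h,z)|^2\dd z=C\int_\R|g_{x_h}(\xi_z)|^2\dd\xi_z .
\]

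Next, for each fixed $\xi_z\neq0$ apply Cauchy--Schwarz in $\xi_h$, splitting $\xi_z\widehat U=(|\xi|^2\widehat U)\cdot(\xi_z|\xi|^{-2})$:
\[
 |g_{x_h}(\xi_z)|^2\le\int_{\R^2}|\xi|^4|\widehat U(\xi_h,\xi_z)|^2\dd\xi_h\cdot\int_{\R^2}\frac{\xi_z^2}{(|\xi_h|^2+\xi_z^2)^2}\dd\xi_h .
\]
The second factor is independent of $\xi_z$. In polar coordinates it equals
\[
 \pi\xi_z^2\int_0^\infty\frac{\dd s}{(s+\xi_z^2)^2}=\pi .
\]
Integrating the first factor in $\xi_z$ gives $\int_{\R^3}|\xi|^4|\widehat U|^2\dd\xi$. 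By Plancherel this is a constant multiple of $\sum_{j,k}\norm{\partial_j\partial_kU}_2^2=\norm{\nabla^2U}_2^2$, using $\sum_{j,k}\xi_j^2\xi_k^2=|\xi|^4$. The resulting bound is uniform in $x_h$, which gives \eqref{traceeq} with an absolute $C$.

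The main point needing care is justifying the pointwise-in-$x_h$ statement for $U\in H^2$ that is merely Sobolev. For this I would first show that the estimate above makes $x_h\mapsto\partial_zU(x_h,\cdot)$ a bounded map into $L^2_z$. The same Cauchy--Schwarz bound, applied to differences with the factor $|e^{ix_h\cdot\xi_h}-e^{iy_h\cdot\xi_h}|\le2$ and dominated convergence, shows this map is continuous into $L^2_z$. Approximating by Schwartz functions then identifies it with the restriction of $\partial_zU$ for every $x_h$, and the supremum bound passes to the limit. No axisymmetry is needed for the lemma itself.
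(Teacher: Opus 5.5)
Your proof is correct and is essentially the paper's argument: for fixed vertical frequency you apply Cauchy--Schwarz in the horizontal frequency against the weight $\xi_z^2/|\xi|^4$, whose horizontal integral is the absolute constant $\pi$, and then use Plancherel in $\xi_z$ and density. Your continuity-in-$x_h$ argument spells out the ``usual trace representative'' that the paper invokes only briefly.
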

\begin{proof}
It suffices initially to take a Schwartz function. Let $\xi\in\R^2$ and $k\in\R$ be the horizontal and vertical Fourier variables. For $k\ne0$, Cauchy--Schwarz gives
\begin{align*}
 \left|k\int_{\R^2}e^{i x_h\cdot\xi}\widehat U(\xi,k)\dd\xi\right|^2
 &\le \left(\int_{\R^2}\frac{k^2}{(|\xi|^2+k^2)^2}\dd\xi\right)
 \left(\int_{\R^2}(|\xi|^2+k^2)^2|\widehat U(\xi,k)|^2\dd\xi\right).
\end{align*}
The first factor is an absolute constant. Integrating in $k$ and using Plancherel proves the estimate uniformly in $x_h$. Density proves the stated result, with the usual trace representative. The $H^2$ assumption fixes the affine ambiguity: a Hessian seminorm assumption alone would incorrectly admit $U=z$.
\end{proof}
For the actual $U=u_r/r$, physical $H^1$ velocity regularity controls $U\in L^2$ through the angular part of $\nabla u$. Together with \eqref{elliptic}, this supplies the $H^2$ hypothesis at positive smooth times.

Define the auxiliary quantities, distinguished from the velocity quotient $U$, by
\begin{equation}\label{aux}
 V(r,z,t)=\int_0^r|u_\theta(s,z,t)|\dd s,
 \qquad a_*(t)=\norm{V/r}_\infty.
\end{equation}
Wei's useful auxiliary estimate is
\begin{equation}\label{auxbound}
 a_*^2\le\norm{J}_2\norm{u_\theta/r}_2\le\norm{J}_2\norm{\nabla u}_2.
\end{equation}
For completeness, Cauchy--Schwarz in $r$, followed by integration of $-\partial_z(u_\theta^2/r)=2J u_\theta$ toward an end of the vertical line, gives
\[
 V(r,z)^2\le r^2\int |J|\,|u_\theta/r|\,s\dd s\dd z
 \le r^2\norm{J}_2\norm{u_\theta/r}_2.
\]
Approximation justifies this computation for the stated class.

\section{An arbitrary-parameter linear Hardy estimate}
Fix a smooth time, and suppose $|\G|\le\epsilon$ on $r\le r_0$, where $0<\epsilon\le1$. Let $K\ge2$ be arbitrary, and put
\[
 \rho=\epsilon/a_*,\qquad r_1=\min\{\rho K,r_0\},\qquad L=\log K.
\]
When $a_*=0$, swirl is identically zero and all the estimates below are trivial.

\begin{lemma}[Linear form]\label{linear}
For an axisymmetric scalar or vector function $f\in H^1$,
\begin{equation}\label{linearform}
 Q(f):=\int\frac{|u_\theta|}{r}|f|^2\dd x
 \le h\norm{\partial_rf}_2^2+
 \frac{C(G_0+h)}{r_1^2}\int_{r\ge r_1/2}|f|^2\dd x,
 \qquad h=C\epsilon(1+L)^2.
\end{equation}
The constant can be chosen absolute, independently of $K$.
\end{lemma}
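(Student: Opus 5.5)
The plan is to prove \eqref{linearform} one vertical line at a time. I fix $z$, and also the angle for vector-valued $f$, writing $g(r)=|f(r,z)|$ and $\int\cdot\,r\dd r$ for the line integral; integrating in $z$ at the end gives the lemma. I split the radial half-line into an inner zone $0<r\le\rho$, a middle zone $\rho\le r\le r_1$ (empty if $\rho\ge r_1$), and an outer zone $r\ge r_1$. The two tools are the averaged swirl bound $V\le a_*r$ from \eqref{aux} and the pointwise bound $|u_\theta|/r=|\G|/r^2$. No appeal to the trace Lemma~\ref{trace} is needed.

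\emph{Outer zone.} On $r_1\le r\le r_0$ I use $|u_\theta|/r\le\epsilon/r^2\le \epsilon/r_1^2$. On $r\ge r_0$ I use $|u_\theta|/r\le G_0/r^2\le G_0/r_0^2\le G_0/r_1^2$, since $r_1\le r_0$. This zone therefore contributes $\le (G_0+\epsilon)r_1^{-2}\int_{r\ge r_1}|f|^2\dd x$, which is of the admissible form.

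\emph{Trace at $r_1$ and the middle zone.} I first record a one-dimensional trace bound. Averaging $g(r_1)\le g(s)+\int_s^{r_1}|\partial_rf|$ over $s\in[r_1/2,r_1]$ and applying Cauchy--Schwarz gives
\[
 g(r_1)^2\le C r_1^{-2}\int_{r_1/2}^{r_1}g^2\,r\dd r+C\int_{r_1/2}^{r_1}|\partial_rf|^2r\dd r .
\]
Next, for $\rho\le r\le r_1$, Cauchy--Schwarz with the weight $s\,ds$ gives
\[
 g(r)^2\le 2g(r_1)^2+2\log(r_1/r)\int|\partial_rf|^2s\dd s ,
\]
and $\log(r_1/\rho)\le\log K=L$. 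Note that $r_1\le\rho K$ holds in both cases of the minimum; if $r_1<\rho$ the middle zone is empty. Since $\rho<r_1\le r_0$, the middle zone lies inside $r\le r_0$, so $|u_\theta|/r\le\epsilon/r^2$ there. Hence
\[
 \int_\rho^{r_1}\frac{\epsilon}{r^2}g^2\,r\dd r\le 2\epsilon L\,g(r_1)^2+2\epsilon\int_\rho^{r_1}\log(r_1/r)\frac{\dd r}{r}\int|\partial_rf|^2s\dd s,
\]
and the last $r$-integral equals $L^2/2$. Inserting the trace bound, the middle zone costs $C\epsilon(1+L)^2\int|\partial_rf|^2r\dd r+C\epsilon L\,r_1^{-2}\int_{r_1/2}^{r_1}g^2r\dd r$. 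Because $\epsilon L\le h$, this is admissible.

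\emph{Inner zone: the expected obstacle.} The difficulty here is that $\int_0 g^2\dd r/r$ diverges when $f(0)\ne0$, so a naive Hardy bound fails. I avoid it by subtracting $g(\rho)$ and using only the averaged quantity $V$, not pointwise smallness of $\G$. Note that $\rho$ may exceed $r_0$, so only $V$ is available near the axis in general. On $(0,\rho]$ the inner-zone integrand is $\frac{|u_\theta|}{r}g^2\,r\dd r=|u_\theta|g^2\dd r$, and $g^2\le2(g-g(\rho))^2+2g(\rho)^2$. For the first piece,
\[
 |g(r)-g(\rho)|^2\le\log(\rho/r)\int_0^\rho|\partial_rf|^2s\dd s .
\]
Integrating by parts with $\partial_rV=|u_\theta|$, the boundary terms vanish and
\[
 \int_0^\rho|u_\theta|\log(\rho/r)\dd r=\int_0^\rho\frac{V}{r}\dd r\le a_*\rho=\epsilon .
\]
For the second piece, $\int_0^\rho|u_\theta|\dd r=V(\rho)\le a_*\rho=\epsilon$. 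Then $g(\rho)^2$ is bounded by the middle-zone chain at $r=\rho$, namely $2g(r_1)^2+2L\int|\partial_rf|^2s\dd s$, followed by the trace bound at $r_1$. If $\rho\ge r_1$, I run the same argument with $\rho$ replaced by $r_1$, using $V(r_1)\le a_*r_1\le\epsilon$, and the middle zone disappears.

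Summing the three zones and integrating in $z$ gives \eqref{linearform} with $h=C\epsilon(1+L)^2$, where $C$ is absolute and independent of $K$. The only place where care is needed is to keep the region of the $r_1^{-2}$ term inside $r\ge r_1/2$, which is guaranteed by the choice of averaging interval in the trace step.
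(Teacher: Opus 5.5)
Your proof is correct. It rests on the same two one-dimensional facts as the paper's proof: the logarithmic Cauchy--Schwarz bound $|g(r)-g(R)|^2\le\log(R/r)\int_r^R|\partial_rf|^2s\dd s$, and the identity $\int_0^R|u_\theta|\log(R/r)\dd r=\int_0^R V(s)/s\dd s$. The paper gets the identity by Fubini and you get it by integration by parts.

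The difference is in the packaging.

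\emph{The paper's route.} It multiplies $f$ by $\chi(r/r_1)$ so that the boundary value at $r_1$ vanishes. A single Cauchy--Schwarz on $[0,r_1]$ then covers your inner and middle zones at once. The middle zone is absorbed into the piecewise bound $V(s)\le\epsilon[1+\log(s/\rho)]$ on $[\rho,r_1]$, which gives $\int_0^{r_1}V/s\dd s\le\epsilon(1+L+L^2/2)$. The cutoff derivative produces the remainder on $r\ge r_1/2$.

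\emph{Your route.} You keep the boundary values. You control $g(r_1)$ by averaging over $[r_1/2,r_1]$ and subtract $g(\rho)$ in the inner zone. In the middle zone you use the pointwise bound $|u_\theta|\le\epsilon/r$ directly, and the $L^2/2$ comes from $\int_\rho^{r_1}\log(r_1/r)\dd r/r$.

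\emph{What each buys.} The cutoff route is shorter. Yours is slightly sharper: the coefficient of the remainder over $r\ge r_1/2$ comes out as $C(G_0+\epsilon(1+L))$ rather than $C(G_0+h)$. This is because in the paper the cutoff-derivative term inherits the full factor $h$. You also make explicit the fixed-angle reduction for vector-valued $f$. Neither refinement is needed for the lemma as stated.
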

\begin{proof}
For $f$ supported in $r<R\le r_1$, radial Cauchy--Schwarz and Fubini give
\begin{align*}
 \int_0^R |u_\theta(r,z)|\,|f(r,z)|^2\dd r
 &\le \left(\int_0^R s|\partial_rf(s,z)|^2\dd s\right)
 \left(\int_0^R\frac{V(s,z)}s\dd s\right).
\end{align*}
We have $V(s,z)\le a_*s$, and, for $\rho\le s\le r_1$,
\[
 V(s,z)\le\epsilon[1+\log(s/\rho)].
\]
Consequently
\[
 \int_0^{r_1}\frac{V(s,z)}s\dd s
 \le\epsilon(1+L+L^2/2).
\]
Integrate in $z$. For a general $f$, apply the supported estimate to $\chi(r/r_1)f$, where $\chi=1$ on $[0,1/2]$ and $\chi=0$ on $[1,\infty)$. The derivative of the cutoff contributes $C h r_1^{-2}\int_{r\ge r_1/2}|f|^2$. Outside the cutoff, use $|u_\theta|/r=|\G|/r^2\le4G_0/r_1^2$. Enlarging the absolute constant proves \eqref{linearform}.
\end{proof}
This proves the needed estimate for arbitrary $K$; it does not invoke the fixed choice $K(\epsilon)$ in the statement of Wei's Lemma~2.3.

\section{The specialized squared-swirl estimate}
\begin{lemma}[Special form bound]\label{special}
In the preceding notation, for an axisymmetric $U\in H^2(\R^3)$,
\begin{equation}\label{specialform}
 \int |u_\theta|^2|\nabla U|^2\dd x
 \le k\norm{\nabla^2U}_2^2+
 \frac{C(G_0^2+\epsilon^2)}{r_1^2}
 \int_{r\ge r_1/2}|\nabla U|^2\dd x,
 \qquad k=C\epsilon^2(1+L).
\end{equation}
\end{lemma}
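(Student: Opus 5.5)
The plan is to avoid applying Lemma~\ref{linear} to the quadratic weight $|u_\theta|^2$ as a black box. That route would cost $\epsilon\cdot h=C\epsilon^2(1+L)^2$, i.e.\ two logarithms. Instead I would split $|\nabla U|^2=|\partial_rU|^2+|\partial_zU|^2$ (the angular component vanishes for axisymmetric scalars) and treat the two pieces with tools tailored to each. On $r\le r_0$ we have $|u_\theta|=|\G|/r\le\epsilon/r$. On $r\ge r_1/2$ we have $|u_\theta|^2\le 4G_0^2/r_1^2$. That second bound already produces the exterior term in \eqref{specialform}, so throughout only the region $r<r_1$ needs work.

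\emph{Radial piece.} On $r<r_1$, $|u_\theta|^2|\partial_rU|^2\le\epsilon^2|\partial_rU|^2/r^2$. Then \eqref{radial} gives at most $\epsilon^2\norm{\nabla^2U}_2^2$, with no logarithm at all.

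\emph{Vertical piece, intermediate range $\rho/2\le r<r_1$.} Here I would use Fubini in the order $r\,\dd r$ outside and $\dd z$ inside, and bound $|u_\theta|^2\le\epsilon^2/r^2$ pointwise. Lemma~\ref{trace} controls each vertical line integral $\int|\partial_zU(r,z)|^2\dd z$ by $C\norm{\nabla^2U}_2^2$, uniformly in the horizontal point. This gives
\[
 C\epsilon^2\norm{\nabla^2U}_2^2\int_{\rho/2}^{r_1}\frac{\dd r}{r}\le C\epsilon^2(1+L)\norm{\nabla^2U}_2^2 .
\]
This is the only place a logarithm appears, and it appears once. If $r_1<\rho$, this range is empty or trivial.

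\emph{Vertical piece, core $r<\rho$.} The trace cannot be used here, because it would require a pointwise bound on $u_\theta$ better than $\epsilon/r$, and $a_*$ only controls the radial average $V/r$. Instead I would use $|u_\theta|^2\le\epsilon|u_\theta|/r$ and apply the radial Cauchy--Schwarz step from the proof of Lemma~\ref{linear} to $f=\chi(r/\rho)\partial_zU$, with cutoff radius $\rho$ in place of $r_1$. The key gain is $\int_0^\rho V(s,z)/s\,\dd s\le a_*\rho=\epsilon$, which holds with no logarithm. This yields $\epsilon^2\norm{\partial_r\partial_zU}_2^2$ plus a cutoff error $C\epsilon^2\rho^{-2}\int_{\rho/2<r<\rho}|\partial_zU|^2\dd x$. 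That error is again handled by Lemma~\ref{trace}: we get $\rho^{-2}\int_{\rho/2}^{\rho}r\,\dd r\le C$, so it contributes $C\epsilon^2\norm{\nabla^2U}_2^2$.

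Summing the four contributions gives \eqref{specialform} with $k=C\epsilon^2(1+L)$. The exterior constant is $C(G_0^2+\epsilon^2)$; the $\epsilon^2$ part absorbs the case $r_1=r_0$ near the boundary of the region where $|\G|\le\epsilon$.

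The main obstacle is the core region $r<\rho$. Near the axis the pointwise bound $\epsilon/r$ is too weak, and only the averaged quantity $a_*$ is available. The step to check carefully is therefore the pairing of the factor $\epsilon|u_\theta|/r$ with the Hardy argument at scale $\rho$ rather than $r_1$. In particular, I need to confirm that the cutoff error at $r\sim\rho$ is genuinely absorbed by the vertical trace without a further logarithm, and that the $H^2$ hypothesis makes Lemma~\ref{trace} applicable to $\partial_zU$ on every vertical line.
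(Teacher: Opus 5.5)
Your argument is correct and uses the same ingredients as the paper's:
\begin{itemize}
\item the Hardy step from Lemma~\ref{linear} at scale $\rho$, where $\int_0^\rho V/s\,\dd s\le a_*\rho=\epsilon$ costs no logarithm, paired with $|u_\theta|^2\le\epsilon|u_\theta|/r$;
\item the vertical trace on the annulus out to $r_1$, which produces the single factor $\log(2r_1/\rho)\le\log2+L$;
\item \eqref{radial} for $\partial_rU$;
\item the bound $|u_\theta|^2\le4G_0^2/r_1^2$ outside.
\end{itemize}

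What differs is the bookkeeping. The paper decomposes by region and feeds the whole of $\nabla U$ through the Hardy core estimate, with a cutoff at radius $2\rho$. It treats $r_1<2\rho$ as a separate case, moving the cutoff to $r_1$. The shell remainder of that case goes into the exterior term, and that is the origin of the $\epsilon^2$ in $C(G_0^2+\epsilon^2)$.

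You decompose by component first, so $\partial_rU$ never passes through the Hardy step, and you always cut off at $\rho$. This removes the case distinction legitimately. The Hardy step uses only the global bound $V(s,z)\le a_*s$, not $|\G|\le\epsilon$. Its cutoff error $\rho^{-2}\int_{\rho/2<r<\rho}|\partial_zU|^2\dd x$ carries no swirl weight, so Lemma~\ref{trace} absorbs it however $\rho$ compares with $r_0$.

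One point you should state explicitly: the pointwise conversion $|u_\theta|^2\le\epsilon|u_\theta|/r$ may be used only on $r<\min\{\rho/2,r_1\}$, because $\rho/2$ can exceed $r_0$. Everything beyond $r_1/2$ is already covered by the exterior bound, so this costs nothing.

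Your route is therefore a little cleaner and in fact yields the exterior constant $CG_0^2$. Your closing explanation of the $\epsilon^2$ term is not right. Nothing is lost, though, because that term is simply superfluous in your argument and the statement is only an upper bound.
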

\begin{proof}
First assume $r_1\ge2\rho$. Take a radial cutoff supported in $r<2\rho$ and equal to one in $r<\rho$. The proof of Lemma~\ref{linear}, on this short radius, gives
\[
 \int\frac{|u_\theta|}{r}|f|^2\dd x
 \le C\epsilon\int|\partial_rf|^2\dd x
 \quad\text{if }\operatorname{supp}f\subset\{r<2\rho\}.
\]
Since $|u_\theta|^2\le\epsilon |u_\theta|/r$ on this region, applying it to the cutoff times $\nabla U$ yields
\begin{equation}\label{core}
 \int_{r<\rho}|u_\theta|^2|\nabla U|^2\dd x
 \le C\epsilon^2\left(\norm{\nabla^2U}_2^2+
 \rho^{-2}\int_{\rho<r<2\rho}|\nabla U|^2\dd x\right).
\end{equation}
The radial component of the shell term is bounded by \eqref{radial}. For the vertical component, Lemma~\ref{trace} gives
\[
 \rho^{-2}\int_\rho^{2\rho}r\dd r\int|\partial_zU(r,z)|^2\dd z
 \le C\norm{\nabla^2U}_2^2.
\]
Thus \eqref{core} is at most $C\epsilon^2\norm{\nabla^2U}_2^2$.

On $\rho\le r\le r_1$, use $|u_\theta|\le\epsilon/r$. The radial component is again controlled by \eqref{radial}. The vertical component satisfies
\[
 \int_\rho^{r_1}\frac{\epsilon^2}{r}\dd r
 \int|\partial_zU(r,z)|^2\dd z
 \le C\epsilon^2\log(r_1/\rho)\norm{\nabla^2U}_2^2
 \le C\epsilon^2L\norm{\nabla^2U}_2^2.
\]
This treats the middle annulus.

If instead $r_1<2\rho$, use a cutoff supported in $r<r_1$ and equal to one for $r<r_1/2$. Since $a_*r_1<2\epsilon$, the supported linear estimate has coefficient at most $2\epsilon$. Its squared-swirl consequence gives $C\epsilon^2\norm{\nabla^2U}_2^2$ and the allowed shell remainder $C\epsilon^2r_1^{-2}\int_{r\ge r_1/2}|\nabla U|^2$.

Finally, in either case, the exterior region is bounded using $|u_\theta|^2\le4G_0^2/r_1^2$ for $r\ge r_1/2$. Combining the core, shell, annulus, and exterior bounds proves \eqref{specialform}.
\end{proof}
The improvement over the generic bound is exactly one power of $1+\log K$. Both the angular Hessian term and the vertical trace are essential.

\section{Coupled energy and absorption without circular constants}
The equations for the two vorticity quotients are
\begin{align}
 \partial_t\Omega+u\cdot\nabla\Omega-(\Delta+2r^{-1}\partial_r)\Omega
 &=-2(u_\theta/r)J,\label{Omegaeq}\\
 \partial_tJ+u\cdot\nabla J-(\Delta+2r^{-1}\partial_r)J
 &=(\omega_r\partial_r+\omega_z\partial_z)U.\label{Jeq}
\end{align}
For smooth axis-compatible fields the diffusion contributes the favorable axis terms. Set
\[
 Y=\norm{J}_2^2,\quad Z=\norm{\Omega}_2^2,\quad
 D_J=\norm{\nabla J}_2^2,\quad D_\Omega=\norm{\nabla\Omega}_2^2.
\]
Integration by parts gives
\begin{align}
 \tfrac12Y'+D_J+\int_\R|J(0,z)|^2\dd z
 &=\int J(\omega_r\partial_r+\omega_z\partial_z)U\dd x,\label{Jenergy}\\
 \tfrac12Z'+D_\Omega+\int_\R|\Omega(0,z)|^2\dd z
 &=-2\int(u_\theta/r)J\Omega\dd x.\label{Oenergy}
\end{align}
Using $\omega_r e_r+\omega_z e_z=\nabla\times(u_\theta e_\theta)$, the first right-hand side is
\[
 \int u_\theta e_\theta\cdot(\nabla J\times\nabla U)\dd x
 \le\tfrac12D_J+\tfrac12\norm{u_\theta\nabla U}_2^2.
\]
After multiplying by two and discarding only nonnegative axis terms,
\begin{equation}\label{JE}
 Y'+D_J\le kD_\Omega+
 \frac{C(G_0^2+\epsilon^2)}{r_1^2}
 \int_{r\ge r_1/2}|\nabla U|^2\dd x.
\end{equation}
Here Lemma~\ref{special} and \eqref{elliptic} have been used.

For any $\sigma>0$, $2|J\Omega|\le\sigma J^2+\sigma^{-1}\Omega^2$. Thus \eqref{Oenergy} and Lemma~\ref{linear} imply
\begin{align}
 Z'+2D_\Omega
 &\le2\sigma hD_J+2\sigma^{-1}hD_\Omega\notag\\
 &\quad+\frac{C(G_0+h)}{r_1^2}
 \int_{r\ge r_1/2}(\sigma J^2+\sigma^{-1}\Omega^2)\dd x.\label{OE}
\end{align}
Choose $\sigma=4h$ and $\lambda=2k$. Adding \eqref{JE} to $\lambda$ times \eqref{OE} leaves positive coefficients of both dissipations provided
\begin{equation}\label{absorb}
 16kh^2\le\tfrac12.
\end{equation}
Indeed, the $D_J$ coefficient is at least $1-16kh^2$, and the $D_\Omega$ coefficient is $3k-k=2k>0$.

The constants in $h=C_h\epsilon(1+L)^2$ and $k=C_k\epsilon^2(1+L)$ are fixed by the two lemmas before choosing parameters. Choose an absolute $b>0$ sufficiently small and then an absolute $\epsilon_*>0$ sufficiently small, and set
\begin{equation}\label{choose}
 L=b\epsilon^{-4/5},\quad K=e^L,\quad0<\epsilon\le\epsilon_*.
\end{equation}
Then $K\ge2$, \eqref{absorb} holds, $\lambda\le1$, and, with fixed absolute comparison constants,
\begin{equation}\label{powers}
 h\asymp\epsilon^{-3/5},\qquad k\asymp\lambda\asymp\epsilon^{6/5}.
\end{equation}
There is no dependence of $b$ or $\epsilon_*$ on the solution or its initial data.

\section{Exterior control and the comparison inequality}
Define $E=Y+\lambda Z$ and $F(t)=\norm{\nabla u(t)}_2$. The remaining exterior coefficients multiply, respectively, $|\nabla U|^2,J^2,\Omega^2$, and are bounded by
\[
 C(G_0^2+\epsilon^2),\qquad
 C\lambda\sigma(G_0+h),\qquad
 C\lambda\sigma^{-1}(G_0+h).
\]
The global bounds $\norm{\nabla U}_2^2\le Z$, $Y\le E$, and $Z\le E/\lambda$ yield an exterior integral bound $CM E$, where we may take
\begin{equation}\label{M}
 M=C\epsilon^{-6/5}(1+G_0)^2.
\end{equation}
Independently, on $r\ge r_1/2$, the definitions and the cylindrical expression for $|\nabla u|^2$ imply
\begin{equation}\label{exterior}
 \int_{r\ge r_1/2}(|\nabla U|^2+J^2+\Omega^2)\dd x
 \le Cr_1^{-2}F^2.
\end{equation}
For example $\nabla U=\nabla u_r/r-u_r e_r/r^2$, and $|u_r|/r$ is part of the Cartesian velocity-gradient norm. The same $M$ bounds all three exterior coefficients without energy weights. Consequently both estimates hold simultaneously, giving
\begin{equation}\label{min}
 E'\le\frac{CM}{r_1^2}\min\{E,F^2/r_1^2\}.
\end{equation}
All parameters here are fixed in time except $a_*$ and hence $r_1$; no differentiated time-dependent energy weight has been introduced.

Put $d=\epsilon K$. If $r_1=d/a_*$, \eqref{auxbound} gives $r_1^{-2}\le E^{1/2}F/d^2$. Therefore
\begin{align*}
 E'&\le\frac{CM EF}{d^2}
 \min\{E^{1/2},F^3/d^2\}\\
 &\le\frac{CM E^{4/3}F^2}{d^{8/3}},
\end{align*}
where $\min\{A,B\}\le A^{2/3}B^{1/3}$ was used. If $r_1=r_0$, \eqref{min} instead gives $E'\le CM F^2/r_0^4$. Hence
\begin{equation}\label{ode}
 E'\le CM F^2\max\{E^{4/3}/d^{8/3},r_0^{-4}\}.
\end{equation}
Let
\[
 \Phi(y)=\int_y^\infty\frac{\dd s}{\max\{s^{4/3}/d^{8/3},r_0^{-4}\}}.
\]
Its derivative is negative, so \eqref{ode} implies the correctly signed inequality
\[
 \frac{\dd}{\dd t}\Phi(E(t))\ge-CM F(t)^2.
\]
Using \eqref{basic}, it suffices to have
\begin{equation}\label{comparison}
 \Phi(E(0))>CM\norm{u_0}_2^2
\end{equation}
to keep $E$ bounded. Indeed $\Phi(E(t))$ then stays bounded away from zero and $\Phi(y)\to0$ as $y\to\infty$.

An explicit lower bound is
\[
 \Phi(E(0))\ge3d^{8/3}
 \max\{E(0),d^2/r_0^3\}^{-1/3}.
\]
Thus a sufficiently large absolute constant in the following two conditions guarantees \eqref{comparison}:
\[
 d^8\ge C E(0)(M\norm{u_0}_2^2)^3,
 \qquad d^2r_0\ge CM\norm{u_0}_2^2.
\]
Since $\lambda\le1$, $E(0)^{1/2}\le\norm{J_0}_2+\norm{\Omega_0}_2$. By \eqref{M} and the definitions of $M_0,M_1$, it is enough that
\begin{equation}\label{Kcriterion}
 K\ge C\epsilon^{-8/5}\max\{M_0^{1/4},r_0^{-1/2}M_1\}=C\epsilon^{-8/5}D.
\end{equation}
The first of the preceding two conditions actually needs only the smaller polynomial $\epsilon^{-29/20}$.

\section{Completion of the threshold and the continuation step}
Choose
\[
 \epsilon=c_0[1+\log(1+D)]^{-5/4}.
\]
Then
\[
 e^{b\epsilon^{-4/5}}=e^{b c_0^{-4/5}}(1+D)^{b c_0^{-4/5}},
 \qquad
 \epsilon^{-8/5}=c_0^{-8/5}[1+\log(1+D)]^2.
\]
Choosing the absolute $c_0$ sufficiently small makes $b c_0^{-4/5}\ge3$, dominates the fixed constant in \eqref{Kcriterion}, and ensures $\epsilon\le\epsilon_*$. The elementary inequality
$D[1+\log(1+D)]^2\le C(1+D)^3$ then verifies \eqref{Kcriterion} for all $D\ge0$.
We have proved $\sup_{t<T_*}\norm{\Omega(t)}_2<\infty$.

This last statement is not silently identified with an $H^2$ bound for the full velocity. The direct continuation criterion of Chen--Fang--Zhang~\cite[Lemma~3.1, printed p.~10]{CFZ} states that a maximal axisymmetric strong solution in $C([0,T_*);H^2)\cap L^2_{\mathrm{loc}}([0,T_*);H^3)$ with $\omega_\theta/r\in L^\infty(0,T_*;L^2)$ extends past any finite $T_*$. Their symbol $\Gamma$ in that lemma denotes our $\Omega$, not our circulation $r u_\theta$. Positive-time strong-solution regularity supplies the stated local $H^3$ property. The bound just proved therefore excludes a finite maximal existence time, and $T_*=\infty$. This is the continuation route cited in Wei's introduction, now identified explicitly; no additional weighted initial-data hypothesis is used.

To justify starting at zero, one may perform every extra-derivative computation on $[\tau,t]$ with $\tau>0$ and then let $\tau\downarrow0$. In the axisymmetric $H^2$ class, $J,\Omega\in C([0,T_*);L^2)$, as in Wei's framework. The energy weights are fixed, the initial norms converge, and the dissipative inequalities pass by lower semicontinuity. This completes the proposed proof of Theorem~\ref{main}.

For Corollary~\ref{cor}, as $r_0\downarrow0$ with fixed nonzero data,
\[
 1+\log(1+D)=\tfrac12\log(1/r_0)+O_{u_0}(1).
\]
If $\alpha>5/4$, the assumed near-axis bound is eventually smaller than \eqref{threshold}, whatever the fixed finite $C_{\rm ax}$. At $\alpha=5/4$, the same comparison holds for a sufficiently small absolute coefficient. Zero data are trivial.

\section{Scope and points for further checking}
\begin{itemize}[leftmargin=*]
 \item This is a proof draft derived from Wei's strong-solution framework.
 \item The near-axis bound is uniform in $z$ and throughout the maximal existence interval. No conclusion is asserted from a bound at a single time or from an averaged-in-time substitute.
 \item The specialized trace requires the normalization supplied by $U\in H^2$; it is not a trace estimate for an arbitrary $H^1$ vector field.
 \item The global circulation maximum principle, finite energy, and the exterior remainders are retained. The argument does not turn a near-axis smallness hypothesis into arbitrary global-small-circulation regularity.
 \item The endpoint $5/4$ requires an absolute small coefficient. Improving below $5/4$ is not proved here.
\end{itemize}

\noindent\emph{Revision note (1.1).} The theorem and all estimates are unchanged. This revision identifies the exact direct continuation lemma used in the last step; it does not claim formal certification or external peer review.

\end{document}